\theoremstyle{plain}
\numberwithin{equation}{section}
\newtheorem{lem}{Lemma}
\newtheorem{prop}{Proposition}
\def\N{{\mathbb N}}
\def\Z{{\mathbb Z}}
\def\dat{6 avril 2020}
\begin{document}

\centerline{\bf Bounds for the counting function of the Jordan-P\'olya numbers}
\vskip 10pt

\begin{center}
Jean-Marie De Koninck \\
(corresponding author) \\
D\'epartement de math\'ematiques et de statistique\\
Universit\'e Laval\\
Qu\'ebec G1V 0A6, Canada \\
jmdk@mat.ulaval.ca
\vskip 10pt

Nicolas Doyon\\
D\'epartement de math\'ematiques et de statistique\\
Universit\'e Laval\\
Qu\'ebec G1V 0A6, Canada\\
nicolas.doyon@mat.ulaval.ca
\vskip 10pt

A. Arthur Bonkli Razafindrasoanaivolala \\
D\'epartement de math\'ematiques et de statistique\\
Universit\'e Laval\\
Qu\'ebec G1V 0A6, Canada\\
arthur@aims.edu.gh
\vskip 10pt

William Verreault \\
D\'epartement de math\'ematiques et de statistique\\
Universit\'e Laval\\
Qu\'ebec G1V 0A6, Canada \\
william.verreault.2@ulaval.ca
\end{center}
\vskip 10pt

\hfill {\it \'Edition du \dat}
\vskip 15pt

\begin{abstract}
A positive integer $n$ is said to be a {\it Jordan-P\'olya number} if it can be written as a product of factorials. We obtain non-trivial lower and upper bounds for the number of Jordan-P\'olya numbers not exceeding a given number $x$.
\end{abstract}
\vskip 5pt

\noindent
{\bf Mathematics Subject Classification:} 11B65, 11A41, 11A51, 11N05  \\
{\bf Key words:} Jordan-P\'olya numbers,  factorial function, friable numbers
\vskip 10pt

\section{Introduction}

A positive integer $n$ is said to be a {\it Jordan-P\'olya number} if it can be written as a product of factorials.
Jordan-P\'olya numbers arise naturally in a simple combinatorial problem. Given  $k$  groups of  $n_1,n_2,\ldots,n_k$  distinct objects, then the number of distinct permutations of these  $n_1+n_2+\cdots+n_k$  objects which maintain objects of the same group adjacent is equal to  $k!\cdot n_1!\cdot n_2!\cdots n_k!$, a Jordan-P\'olya number.
\vskip 5pt

The Jordan-P\'olya numbers below 10,000 are
\vskip 5pt

\noindent
1, 2, 4, 6, 8, 12, 16, 24, 32, 36, 48, 64, 72, 96, 120, 128, 144,
192, 216, 240, 256, 288, 384, 432, 480, 512, 576, 720, 768, 864, 960,
1024, 1152, 1296, 1440, 1536, 1728, 1920, 2048, 2304, 2592, 2880,
3072, 3456, 3840, 4096, 4320, 4608, 5040, 5184, 5760, 6144, 6912,
7680, 7776, 8192, 8640, 9216.
\vskip 5pt

For a longer list, see the {\it On-Line Encyclopedia of Integer Sequences}, Sequence A001013.
Much study has been done on a particular subset of the Jordan-P\'olya numbers, namely those  which are themselves factorials. In particular, consider the equation
\begin{equation} \label{eq:1-1}
n! = a_1! a_2! \cdots a_r! \quad \mbox{ in integers } n>a_1 \ge a_2 \ge \cdots \ge a_r \ge 2,\ r\ge 2.
\end{equation}
This equation has infinitely many ``trivial'' solutions. Indeed, choose any integers $a_2 \ge \cdots \ge a_r \ge 2$ and set $n=a_2!\cdots a_r!$. Then, choose  $a_1=n-1$. One can easily see that $n! = n\cdot (n-1)! = a_1! a_2! \cdots a_r!$. Besides these trivial solutions of equation (\ref{eq:1-1}), we find the non-trivial solutions
\begin{equation} \label{eq:1-2}
9!=2!\cdot 3!^2 \cdot 7!, \qquad 10!=6!\cdot 7!=3!\cdot 5! \cdot 7!, \qquad 16!=2!\cdot 5! \cdot 14!.
\end{equation}
According to Hickerson's conjecture, there are no other non-trivial solutions for equation (\ref{eq:1-1}).
In 2007, Luca \cite{kn:luca} showed that if the $abc$ conjecture holds, then equation (\ref{eq:1-1}) has only a finite number of non-trivial solutions.
In 2016, Nair and Shorey \cite{kn:nair-shorey} showed that any other non-trivial solution $n$ of (\ref{eq:1-1}), besides those in (\ref{eq:1-2}),
must satisfy $n>e^{80}$.
\vskip 5pt

On the other hand, more than 40 years ago, Erd\H{o}s and Graham \cite{kn:erdos-graham} showed that the number of distinct integers of the form $a_1! a_2! \cdots a_r!$, where $a_1<a_2<\cdots <a_r \le y$ is $\exp\{ (1+o(1)) y (\log \log y)/\log y  \}$ as $y\to \infty$.
\vskip 5pt

Here, letting ${\cal J}$ stand for the set of Jordan-P\'olya numbers and  ${\cal J}(x)$ for its counting function, we show that ${\cal J}(x) =o(x)$ and in fact, given any small $\varepsilon>0$, we show the much stronger estimate
\begin{equation} \label{eq:1}
\mathcal{J}(x) < \exp\left\{ (4+\varepsilon)\frac{\sqrt{\log x}\log\log\log x}{\log\log x} \right\} \qquad (x\ge x_1)
\end{equation}
for some $x_1=x_1(\varepsilon)>0$.
We also show that, for any given $\varepsilon>0$, there exists $x_2=x_2(\varepsilon)$ such that
\begin{equation} \label{eq:1a}
{\cal J}(x) > \exp\left\{ (2-\varepsilon) \frac{\sqrt{\log x}}{\log \log x} \right\} \qquad (x\ge x_2).
\end{equation}

\section{Preliminary results}

We first mention some known results in the form of lemmas and propositions that will prove useful in establishing the lower and upper bounds for ${\cal J}(x)$.

We start with a weak form of Stirling's formula for the factorial function, a proof of which can be found on page 11 in the book of De Koninck and Luca \cite{kn:dk-fl}.
\begin{lem} \label{lem:weak}
For each integer $m\ge 1$, we have
$$m! > \left( \frac me \right)^m.$$
\end{lem}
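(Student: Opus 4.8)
The plan is to deduce this one-line inequality from the power-series expansion of the exponential function. The key observation is that for every real $m$ we have $e^m = \sum_{k=0}^{\infty} m^k/k!$, and every term of this series is nonnegative when $m \ge 1$. First I would single out the term corresponding to $k = m$, namely $m^m/m!$, so that the full sum dominates this single term. Since the $k = 0$ term already contributes $1 > 0$, the domination is strict, yielding $e^m > m^m/m!$.

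Rearranging this strict inequality then gives $m! > m^m/e^m = \left(\frac{m}{e}\right)^m$, which is exactly the assertion of the lemma. The only point requiring a moment's care is the strictness: I would observe that for every integer $m \ge 1$ the series contains at least one positive term besides the $k = m$ term (for instance the constant term $1$), so the bound is genuinely strict rather than an equality.

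Alternatively, one could argue by induction on $m$. The base case $m = 1$ reads $1 > 1/e$, which is immediate. For the inductive step, assuming $m! > (m/e)^m$, I would multiply through by $m+1$ to obtain $(m+1)! > (m+1)(m/e)^m$, after which it suffices to verify $(m+1)(m/e)^m \ge \left(\frac{m+1}{e}\right)^{m+1}$; upon simplification this reduces to the familiar bound $(1 + 1/m)^m < e$. The main (and essentially only) obstacle along this second route is having to invoke that standard estimate, whereas the power-series argument sidesteps it entirely. For that reason the power-series proof is the one I would carry out.
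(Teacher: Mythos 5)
Your power-series argument is correct and complete, and it is essentially the same standard proof the paper points to (the cited De Koninck--Luca reference deduces the bound from $e^m = \sum_{k\ge 0} m^k/k! > m^m/m!$ in exactly this way). Nothing to add; the inductive alternative you sketch also works, but the series proof is the canonical one here.
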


We now state a more precise form of Stirling's formula, which is a particular case of formula (4) in a 2009 paper of De Angelis \cite{kn:angelis}.
\begin{lem} \label{lem:stirling}
For all  integers $n\ge 2$,
$$n!= \left(\frac ne\right)^n \sqrt{2\pi n} \left( 1 + \frac 1{12n} + O \left( \frac 1{n^2} \right) \right).$$
\end{lem}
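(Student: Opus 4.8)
The plan is to derive the expansion directly from the Euler--Maclaurin summation formula applied to $f(t)=\log t$. First I would write $\log n! = \sum_{k=1}^{n}\log k$ and compare this sum to the integral $\int_{1}^{n}\log t\,dt = n\log n - n + 1$. Carrying the Euler--Maclaurin formula out to the first Bernoulli correction ($B_2=1/6$) then yields
\begin{equation*}
\log n! = n\log n - n + \tfrac12\log n + C + \frac{1}{12n} + O\!\left(\frac{1}{n^{3}}\right),
\end{equation*}
where the endpoint term $\tfrac{f(1)+f(n)}{2}=\tfrac12\log n$ produces the $\tfrac12\log n$ contribution (since $\log 1=0$), and the $\frac{1}{12n}$ term comes from $\frac{B_2}{2}\bigl(f'(n)-f'(1)\bigr)=\frac{1}{12}\bigl(\frac1n-1\bigr)$, the constant $-\frac{1}{12}$ together with the $+1$ from the integral being absorbed into $C$. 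The $O(1/n^{3})$ remainder arises because the next Euler--Maclaurin term involves $f^{(3)}(t)=2/t^{3}$, and its uniform validity for all $n\ge 2$ follows from the explicit integral form of the Euler--Maclaurin remainder.

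The main obstacle is the evaluation of the constant $C$: the summation machinery pins down every term except this additive constant, and showing $C=\tfrac12\log(2\pi)$ (equivalently $e^{C}=\sqrt{2\pi}$) requires an independent input. I would supply this via the Wallis product $\frac{\pi}{2}=\prod_{k\ge1}\frac{(2k)^2}{(2k-1)(2k+1)}$, or equivalently through the asymptotics of the central binomial coefficient $\binom{2n}{n}$: substituting the partially determined form $n!\sim A\,n^{\,n+1/2}e^{-n}$, with $A=e^{C}$ still unknown, into the Wallis identity forces $A=\sqrt{2\pi}$. This is the one genuinely delicate point; everything preceding and following it is mechanical bookkeeping.

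Once $C$ is known, I would finish by exponentiating. Writing
\begin{equation*}
n! = \left(\frac{n}{e}\right)^{n}\sqrt{2\pi n}\,\exp\!\left(\frac{1}{12n}+O\!\left(\frac{1}{n^{3}}\right)\right)
\end{equation*}
and expanding the exponential as $\exp\!\left(\frac{1}{12n}+O(1/n^{3})\right) = 1 + \frac{1}{12n} + O\!\left(\frac{1}{n^{2}}\right)$, where the $O(1/n^{2})$ term now also swallows the square of $\frac{1}{12n}$, gives exactly the stated formula. (Alternatively, one could cite the asymptotic expansion of $\Gamma(n+1)$ and truncate after the first correction term, but the Euler--Maclaurin derivation is self-contained apart from the Wallis input.)
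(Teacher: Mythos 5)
Your proof is correct, but it is a genuinely different route from the paper's: the paper gives no proof at all, simply invoking the lemma as a particular case of formula (4) in De Angelis's 2009 paper, which provides explicit enclosing bounds for the full Stirling series. Your Euler--Maclaurin computation is the classical self-contained derivation, and you correctly isolate the one delicate point, namely pinning down the constant $C=\tfrac12\log(2\pi)$ via the Wallis product, since the summation machinery alone cannot determine it. Two small remarks. First, your claim that the remainder is $O(1/n^{3})$ ``because the next term involves $f^{(3)}(t)=2/t^{3}$'' needs a touch more care: bounding the periodic-Bernoulli remainder $\int_n^\infty B_3(\{t\})f'''(t)\,dt$ crudely by $|B_3(\{t\})|\le \mathrm{const}$ only gives $O(1/n^{2})$; to get $O(1/n^{3})$ you should either carry Euler--Maclaurin one step further, to the $B_4$ term $\frac{B_4}{4!}\bigl(f'''(n)-f'''(1)\bigr)$, or integrate by parts using the mean-zero property of $B_3(\{t\})$. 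But this is harmless here, since even a remainder of $O(1/n^{2})$ in the logarithm survives exponentiation to give $1+\frac{1}{12n}+O(1/n^{2})$, which is all the lemma asserts. Second, the validity ``for all integers $n\ge 2$'' rather than merely asymptotically is automatic, as the finitely many small $n$ can be absorbed into the implied constant, so your appeal to the explicit integral remainder, while correct, is more than is needed. What your approach buys is self-containedness (modulo Wallis); what the paper's citation buys is sharper, fully explicit inequalities without any bookkeeping, of which the stated lemma is a weak consequence.
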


\begin{lem} \label{lem:binomial}
Given any positive integers $a$ and $b$,
$$
{ a+b\choose a}\le\left(\frac{e(a+b)}{a}\right)^a.
$$
\end{lem}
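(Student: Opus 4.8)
The plan is to bound the binomial coefficient from above by discarding all but the dominant factors in its numerator, and then to supply a lower bound for the remaining factorial in the denominator via the weak form of Stirling's formula already recorded as Lemma~\ref{lem:weak}. First I would write
$$
\binom{a+b}{a}=\frac{(a+b)!}{a!\,b!}=\frac{1}{a!}\prod_{j=1}^{a}(b+j),
$$
observing that after cancelling $b!$ the numerator is a product of exactly $a$ consecutive integers $b+1,b+2,\ldots,b+a$, each of which is at most $a+b$. Replacing every factor by the largest one gives the clean upper bound
$$
\binom{a+b}{a}\le\frac{(a+b)^a}{a!}.
$$

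Next I would invoke Lemma~\ref{lem:weak}, which applies since $a\ge 1$, to estimate the denominator from below by $a!>\left(a/e\right)^a$. Substituting this into the previous display yields
$$
\binom{a+b}{a}\le\frac{(a+b)^a}{(a/e)^a}=\left(\frac{e(a+b)}{a}\right)^a,
$$
which is precisely the asserted inequality.

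I do not anticipate any genuine obstacle: the argument is entirely elementary, and the single point deserving a moment's attention is merely the bookkeeping that ensures the numerator, after removing $b!$, is a product of \emph{exactly} $a$ terms, so that bounding each factor by $a+b$ produces the $a$-th power appearing in the statement rather than a different exponent. Since Lemma~\ref{lem:weak} supplies the strict bound $a!>\left(a/e\right)^a$, the conclusion in fact holds with strict inequality, and the non-strict form stated in the lemma follows a fortiori.
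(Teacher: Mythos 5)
Your proof is correct and is essentially identical to the paper's own argument: both bound $\binom{a+b}{a}$ by $\frac{(a+b)^a}{a!}$ after cancelling $b!$ (the paper writes the same product as $\frac{b+1}{1}\cdot\frac{b+2}{2}\cdots\frac{b+a}{a}$) and then apply Lemma~\ref{lem:weak} to the denominator. Your closing remark that the inequality is in fact strict is a harmless, accurate bonus.
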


\begin{proof}
This follows from the following string of inequalities.
$${ a+b\choose a}=\frac{b+1}{1} \times \frac{b+2}{2}\times \cdots \times \frac{b+a}{a}\le \frac{(b+a)^a}{a!}\le\left(\frac{e(a+b)}{a}\right)^a,$$
where we used Lemma \ref{lem:weak} for this last inequality.
\end{proof}

\begin{lem} \label{lem:feller}
Given positive integers $k\le R$, the number $S_k(R)$ of solutions $(r_1,r_2,\ldots,r_k)$ in non-negative integers $r_1,r_2,\ldots,r_k$ of the inequality
$$r_1+r_2 + \cdots + r_k \le R$$
satisfies  $\displaystyle{S_k(R)= {R+k \choose R} }$.
\end{lem}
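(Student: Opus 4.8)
The plan is to reduce the counting of solutions of the inequality to the counting of solutions of an associated \emph{equality}, and then to invoke the classical ``stars and bars'' argument. First I would introduce an auxiliary slack variable $r_{k+1}\ge 0$ by setting $r_{k+1}=R-(r_1+r_2+\cdots+r_k)$. Since $r_1+r_2+\cdots+r_k\le R$, this $r_{k+1}$ is a non-negative integer, and conversely any non-negative integer $r_{k+1}$ satisfying $r_1+r_2+\cdots+r_k+r_{k+1}=R$ forces $r_1+r_2+\cdots+r_k\le R$. This exhibits a bijection between the set of solutions counted by $S_k(R)$ and the set of non-negative integer solutions $(r_1,\ldots,r_{k+1})$ of the equation $r_1+r_2+\cdots+r_{k+1}=R$. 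Hence $S_k(R)$ equals the number of solutions of this single equation in $k+1$ non-negative unknowns.

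Next I would count the solutions of $r_1+r_2+\cdots+r_{k+1}=R$ by the standard stars-and-bars correspondence: encode each solution as a lineup of $R$ indistinguishable ``stars'' together with $k$ ``bars'', where the bars cut the stars into $k+1$ (possibly empty) consecutive blocks whose sizes are exactly $r_1,\ldots,r_{k+1}$. The number of such lineups is the number of ways to place the $k$ bars among the $R+k$ total symbols, namely ${R+k \choose k}$. Finally, using the symmetry ${R+k \choose k}={R+k \choose R}$, I obtain $S_k(R)={R+k \choose R}$, as claimed.

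There is no substantial obstacle here, the identity being elementary; the only point deserving care is checking that the slack-variable map is genuinely a bijection (both well defined and invertible), since that is precisely what makes counting solutions of the equality the same as counting solutions of the inequality. A fully self-contained alternative, which avoids appealing to any named formula, is induction on $k$: the base case $k=1$ gives $S_1(R)=R+1={R+1 \choose R}$, while the inductive step conditions on the last coordinate to write $S_k(R)=\sum_{j=0}^{R}S_{k-1}(j)$ and then combines the induction hypothesis $S_{k-1}(j)={j+k-1 \choose k-1}$ with the hockey-stick identity $\sum_{j=0}^{R}{j+k-1 \choose k-1}={R+k \choose k}$. I would most likely present the slack-variable argument as the main proof for its brevity, noting in passing that the hypothesis $k\le R$ is not actually needed for the combinatorial identity itself and is retained only because that is the regime in which the lemma is later applied.
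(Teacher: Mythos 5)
Your proposal is correct, and your main argument takes a genuinely different (and arguably cleaner) route than the paper. The paper decomposes the solution set of the inequality according to the exact value $m$ of the sum $r_1+\cdots+r_k$, cites Feller's formula (5.2) for the ${m+k-1 \choose k-1}$ compositions of $m$ into $k$ non-negative parts, and then evaluates $\sum_{m=0}^{R}{m+k-1 \choose k-1}={R+k \choose k}$ by induction (the hockey-stick identity). You instead absorb the inequality into a single equation via the slack variable $r_{k+1}=R-(r_1+\cdots+r_k)$, verify the bijection, and apply stars and bars once to get ${R+k\choose k}={R+k\choose R}$ directly. Your slack-variable argument buys a one-step count with no summation identity needed, and the bijection check you flag is exactly the right point of care; the paper's approach buys independence from the stars-and-bars encoding by leaning on a textbook reference, at the cost of the extra hockey-stick summation. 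Note also that your ``fully self-contained alternative'' by induction on $k$, conditioning on the last coordinate and summing $S_{k-1}(j)$ over $j\le R$, is essentially the paper's own proof restated, so you have in effect produced both arguments. Your closing observation that the hypothesis $k\le R$ is never used is accurate: neither proof requires it, and the paper retains it only because that is the regime in which the lemma is applied.
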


\begin{proof}
It follows from formula (5.2) in the book of W. Feller \cite{kn:feller} that the number of ways of writing a positive integer $m$ as a sum of $k$ non-negative integers is equal to ${m+k-1 \choose k-1}$. Therefore, since $S_k(R)$ is the sum of this last expression as $m$ varies from 0 to $R$,  we find, using induction, that
$$S_k(R) = \sum_{m=0}^R   {m+k-1 \choose k-1} = {R+k \choose k} = {R+k \choose R}.$$
\end{proof}

The next result, which is of independent interest, is a key element in the proof of the upper bound for ${\cal J}(x)$. Essentially, it says that the sequence of the exponents in the prime factorisation of $m!$ decreases faster than the sequence of the primes to which they are attached increases.
\begin{lem} \label{lem:binomial-prime}
Let the prime factorisation of $m!$ be written as
$$m! = 2^{\alpha_2} \cdot 3^{\alpha_3}  \cdot 5^{\alpha_5} \cdots  p_t^{\alpha_{p_t}},$$
where $p_t$ is the largest prime number not exceeding $m$. Then, given any primes $p,q$ such that $p<q\le p_t$, we have
$$\frac{\alpha_p}{\alpha_q} \ge \left\lfloor \frac qp \right\rfloor.$$
\end{lem}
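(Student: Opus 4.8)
The plan is to use Legendre's formula for the exponent of a prime in a factorial, namely $\alpha_p = \sum_{i\ge 1}\lfloor m/p^i\rfloor$, and then to prove the stronger, term-by-term inequality $\lfloor m/p^i\rfloor \ge \lfloor q/p\rfloor\,\lfloor m/q^i\rfloor$ for every $i\ge 1$. Summing this over $i$ immediately yields $\alpha_p \ge \lfloor q/p\rfloor\,\alpha_q$; since $q\le p_t\le m$ guarantees $\alpha_q\ge 1>0$, dividing gives the claim $\alpha_p/\alpha_q \ge \lfloor q/p\rfloor$.

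To establish the term-by-term inequality, I would write $d = \lfloor q/p\rfloor$, so that $dp\le q$ and hence $d^i p^i\le q^i$ for each $i\ge 1$. This forces $m/q^i\le m/(d^i p^i)$, and taking floors, $\lfloor m/q^i\rfloor \le \lfloor m/(d^i p^i)\rfloor$. Here the key tool is the standard nested-floor identity $\lfloor m/(ab)\rfloor = \lfloor \lfloor m/a\rfloor/b\rfloor$, valid for positive integers $a,b$, applied with $a=p^i$ and $b=d^i$, which rewrites the right-hand side as $\lfloor \lfloor m/p^i\rfloor/d^i\rfloor \le \lfloor m/p^i\rfloor/d^i$.

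Multiplying through by $d$ then gives $d\,\lfloor m/q^i\rfloor \le d^{\,1-i}\,\lfloor m/p^i\rfloor \le \lfloor m/p^i\rfloor$, the last step holding because $d\ge 1$ and $i\ge 1$ make $d^{\,1-i}\le 1$. This is exactly the desired term-by-term bound, and summing over $i$ completes the argument.

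I do not anticipate a serious obstacle here; the only slightly delicate point is the passage from $\lfloor m/q^i\rfloor$ to a clean multiple of $\lfloor m/p^i\rfloor$, which is handled precisely by the nested-floor identity rather than by the cruder bound $\lfloor x\rfloor \le x$ (which would lose too much). One should also note that the primality of $p$ and $q$ is used only to make $\alpha_p,\alpha_q$ the genuine exponents through Legendre's formula: the inequality itself relies merely on $p<q$ and $q\le m$, which is a reassuring sanity check on the argument.
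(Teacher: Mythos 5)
Your proof is correct, and it follows the same skeleton as the paper's --- Legendre's formula plus a term-by-term comparison $\lfloor m/p^i\rfloor \ge \lfloor q/p\rfloor\,\lfloor m/q^i\rfloor$ --- but the way you establish the key floor inequality is genuinely different. The paper fixes $k$ with $kp<q<(k+1)p$, proves only the case $i=1$ by writing $m=r_1p+\theta_1=r_2q+\theta_2$ and extracting the integer inequality $r_1\ge kr_2$ from $r_1+\frac{p-1}{p}>kr_2$, and then asserts that the inequality for the higher powers $p^i,q^i$ ``surely'' follows; it does, by rerunning the same division argument with $p^i,q^i$ in place of $p,q$ (since $kp<q$ forces $kp^i<q^i$), but the paper leaves that step implicit. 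You instead handle all $i\ge 1$ uniformly: from $d=\lfloor q/p\rfloor$ you get $d^ip^i\le q^i$, and the nested-floor identity $\lfloor m/(ab)\rfloor=\bigl\lfloor\lfloor m/a\rfloor/b\bigr\rfloor$ yields $\lfloor m/q^i\rfloor\le \lfloor m/p^i\rfloor/d^i$, which is actually stronger than needed (a factor $d^i$ rather than $d$); this buys a cleaner argument with no remainder bookkeeping and, as a bonus, closes the small gap the paper glosses over at higher prime powers. One side remark in your write-up is off, though it costs you nothing: the ``crude'' bound $\lfloor x\rfloor\le x$ would \emph{not} lose too much, because $d\lfloor m/q^i\rfloor\le d\,m/q^i\le m/p^i$ and the left-hand side is an integer, hence automatically at most $\lfloor m/p^i\rfloor$ --- and that integrality pinch is precisely the engine of the paper's own proof. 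Your closing observations are accurate: $q\le p_t\le m$ guarantees $\alpha_q\ge 1$ so the division is legitimate, and primality enters only through Legendre's formula.
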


\begin{proof}
Let $p<q\le p_t$ be fixed. Then, there exist two positive integers $u\ge v$ such that
\begin{eqnarray} \label{eq:light-1}
\alpha_p & = & \left\lfloor \frac mp \right\rfloor + \left\lfloor \frac m{p^2} \right\rfloor + \cdots + \left\lfloor \frac m{p^u} \right\rfloor,\\ \label{eq:light-2}
\alpha_q & = & \left\lfloor \frac mq \right\rfloor + \left\lfloor \frac m{q^2} \right\rfloor + \cdots + \left\lfloor \frac m{q^v} \right\rfloor.
\end{eqnarray}
Let $k$ be the unique positive integer satisfying $kp < q < (k+1)p$. Clearly, our claim will be proved if we can show that
\begin{equation} \label{eq:claim-1}
\alpha_p \ge k\, \alpha_q.
\end{equation}
Now, if we can show that
\begin{equation} \label{eq:claim-2}
\left\lfloor \frac mp \right\rfloor \ge k \left\lfloor \frac mq \right\rfloor,
\end{equation}
then surely we will have
$\displaystyle{\left\lfloor \frac m{p^i} \right\rfloor \ge k \left\lfloor \frac m{q^i} \right\rfloor}$ for each $i=2,3,\ldots,u$ and therefore, in light of (\ref{eq:light-1}) and (\ref{eq:light-2}), inequality (\ref{eq:claim-1}) will follow.
This means that we only need to prove (\ref{eq:claim-2}). Now, there exist two positive integers $r_1$ and $r_2$ such that
\begin{eqnarray*}
m & = & r_1p + \theta_1\ \mbox{ for some non-negative integer }\theta_1 \le p-1,\\
m & = & r_2q + \theta_2\ \mbox{ for some non-negative integer }\theta_2 \le q-1,
\end{eqnarray*}
and therefore,
$$1 = \frac{r_1p + \theta_1}{r_2q + \theta_2} \le \frac{r_1p + p-1}{r_2q} < \frac{r_1p + p-1}{r_2\cdot kp},$$
thereby establishing that
$$r_1p + p-1 > k r_2 p,$$
so that
\begin{equation} \label{eq:follow}
r_1 + \frac{p-1}p > kr_2.
\end{equation}
Since $r_1$ and $r_2$ are two integers whereas $\displaystyle{\frac{p-1}p}$ is a positive number smaller than 1, it follows from (\ref{eq:follow}) that $r_1\ge kr_2$, which proves (\ref{eq:claim-2}) since $\displaystyle{r_1= \left\lfloor \frac mp \right\rfloor}$
and $\displaystyle{r_2= \left\lfloor \frac mq \right\rfloor}$.
\end{proof}

The following result provides very useful explicit upper and lower bounds for the $k$-th prime number. 
\begin{lem} \label{lem:rosser}
If $p_k$ stands for the $k$-th prime number, then 
\begin{equation} \label{eq:pk-6}
p_k < k \log k + k \log \log k \qquad (k\ge 6)
\end{equation}
and 
\begin{equation} \label{eq:pk}
p_k>k\log k \qquad (k \ge 1).
\end{equation}
\end{lem}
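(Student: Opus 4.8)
The plan is to recognize these as the classical explicit estimates for the $k$-th prime: the lower bound $p_k > k\log k$ (valid for all $k\ge 1$) is due to Rosser, while the upper bound $p_k < k(\log k + \log\log k)$ for $k\ge 6$ is part of the Rosser--Schoenfeld package of inequalities for functions of prime numbers. In the same spirit as Lemmas \ref{lem:weak}--\ref{lem:feller}, the economical route is simply to quote these references rather than reprove them, since both are standard and their self-contained proofs are long.

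Were one to derive the bounds directly, the natural strategy is to transfer explicit inequalities for the prime-counting function $\pi(x)$ into inequalities for $p_k$, using the identity $\pi(p_k)=k$ together with the monotonicity of $\pi$. For the lower bound I would argue by contradiction: if $p_k\le k\log k$, then $k=\pi(p_k)\le\pi(k\log k)$, and feeding the upper estimate $\pi(x)<\frac{x}{\log x}\bigl(1+\frac{c}{\log x}\bigr)$ into $x=k\log k$ (so that $\log x=\log k+\log\log k$) yields
$$\pi(k\log k)<\frac{k\log k}{\log k+\log\log k}\left(1+\frac{c}{\log k+\log\log k}\right),$$
whose right-hand side is strictly below $k$ once $\log\log k>c$, contradicting $\pi(p_k)=k$. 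For the upper bound I would run the symmetric argument: assuming $p_k\ge k(\log k+\log\log k)=:X$ gives $k\ge\pi(X)$, and it suffices to show $\pi(X)>k$ by substituting into a lower estimate for $\pi$.

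The main obstacle is that these asymptotic gains are delicate and the two bounds are not symmetric in difficulty. For the lower bound the saving comes from the factor $\frac{\log k}{\log k+\log\log k}$, which must be shown to beat the correction $1+\frac{c}{\log x}$; for the upper bound, however, the naive estimate $\pi(x)>\frac{x}{\log x}$ is in fact \emph{insufficient}, since the denominator $\log X=\log k+\log(\log k+\log\log k)$ eats up essentially all of the gain and leaves $\pi(X)$ just below $k$. One is forced to use the sharper lower estimate $\pi(x)>\frac{x}{\log x}\bigl(1+\frac{1}{2\log x}\bigr)$, whose extra term $\frac{k}{2\log k}$ finally pushes $\pi(X)$ past $k$. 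Such sharp explicit forms of the prime number theorem rest on an explicit zero-free region for $\zeta$, and because the bounds are claimed for \emph{all} $k\ge 1$ (resp. $k\ge 6$), the analytic argument only handles large $k$ and must be supplemented by direct numerical verification over the finitely many small values. It is precisely this interplay between sharp analytic estimation for large $k$ and finite checking for small $k$ that makes a fully self-contained proof cumbersome, and the reason I would defer to the established bounds of Rosser and Schoenfeld.
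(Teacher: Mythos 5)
Your proposal takes essentially the same route as the paper, whose entire proof is the two-line citation of Rosser \cite{kn:rosser} and Rosser--Schoenfeld \cite{kn:scho} that you identify as the economical option; your sketch of a direct derivation via explicit bounds for $\pi(x)$ is supplementary and correctly flags why a self-contained proof would be cumbersome. The only discrepancy is attributional: the paper credits the upper bound (\ref{eq:pk-6}) to Rosser and the lower bound (\ref{eq:pk}) to Rosser--Schoenfeld, the reverse of your assignment, though given the title of Rosser's 1938 paper (\emph{The $n$-th prime is greater than $n\log n$}) your attribution of the lower bound to Rosser is the historically natural one.
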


\begin{proof}
The first inequality is due to Rosser \cite{kn:rosser}, whereas the second is due to Rosser and Schoenfeld  \cite{kn:scho}.
\end{proof}

The prime number theorem can be written in various forms. We will be using the following, which is essentially Theorem 5.1 in the book of Ellison and Ellison \cite{kn:ellison}.
\begin{prop} \label{prop:pnt}
Setting $\theta(x):=\sum_{p\le x} \log p$, there exists an absolute constant $a>0$ such that
$$\theta(x) = x\left( 1 + O \left( \frac 1{e^{a\sqrt{\log x}}}\right) \right).$$
\end{prop}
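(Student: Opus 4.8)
The plan is to establish this classical form of the prime number theorem by the analytic method, working through the Chebyshev function $\psi(x)=\sum_{n\le x}\Lambda(n)$ rather than $\theta(x)$ directly. First I would note that $\psi(x)-\theta(x)=\sum_{k\ge 2}\theta(x^{1/k})=O(\sqrt{x}\,\log x)$, since there are only $O(\log x)$ nonzero terms, each bounded by $\theta(\sqrt{x})=O(\sqrt{x})$. This discrepancy is negligible against the target error $x\,e^{-a\sqrt{\log x}}$, so it suffices to prove $\psi(x)=x\bigl(1+O(e^{-a\sqrt{\log x}})\bigr)$ and then transfer back to $\theta$.

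The analytic input comes from the identity $-\zeta'(s)/\zeta(s)=\sum_{n=1}^{\infty}\Lambda(n)n^{-s}$, valid for $\Re s>1$. Applying a truncated Perron formula with $c=1+1/\log x$ yields
$$\psi(x)=\frac{1}{2\pi i}\int_{c-iT}^{c+iT}\left(-\frac{\zeta'(s)}{\zeta(s)}\right)\frac{x^s}{s}\,ds+R(x,T),$$
where the remainder $R(x,T)$ is of order $x(\log x)^2/T$ and hence small for $T$ large. The next step is to shift the line of integration to the left, past the simple pole of $\zeta$ at $s=1$: the residue there equals $x$ and furnishes the main term, the factor $1/s$ contributes a bounded quantity at $s=0$, and the remaining contour integral is what must be shown to be $O(x\,e^{-a\sqrt{\log x}})$.

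The heart of the argument, and the main obstacle, is securing a \emph{zero-free region} for $\zeta(s)$ of the classical de la Vall\'ee Poussin type, namely $\zeta(s)\ne 0$ for $\sigma\ge 1-c_0/\log(|t|+2)$, together with the bound $\zeta'(s)/\zeta(s)=O(\log(|t|+2))$ throughout that region. The non-vanishing rests on the elementary but crucial inequality $3+4\cos\varphi+\cos 2\varphi\ge 0$, applied to $\log\bigl|\zeta(\sigma)^3\zeta(\sigma+it)^4\zeta(\sigma+2it)\bigr|$, which forces $\zeta$ away from zero near the line $\Re s=1$; the logarithmic-derivative bound then follows from standard Borel--Carath\'eodory estimates for $\zeta$ in the critical strip. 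With this region in hand, one shifts the contour to $\sigma=1-c_0/\log T$ and estimates the horizontal and vertical pieces against $x^{\sigma}(\log T)^2$.

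Finally I would balance the two error sources. The Perron remainder shrinks as $T$ grows, while the shifted-contour integral improves as the contour moves left, that is, again as $T$ grows; optimizing by taking $\log T\asymp\sqrt{\log x}$ equalizes the contributions $1/T=e^{-\log T}$ and $e^{-c_0\log x/\log T}$, each of size $x\,e^{-a\sqrt{\log x}}$ for a suitable absolute constant $a>0$. Dividing through by $x$ and reinserting the $\theta$-to-$\psi$ correction gives the stated estimate. Every step is standard, which is precisely why the authors quote it from Theorem~5.1 of Ellison and Ellison; were one to reprove it, essentially all the content would reside in the zero-free region.
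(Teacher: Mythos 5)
The paper gives no proof of this proposition at all---it simply cites Theorem~5.1 of Ellison and Ellison---and your sketch is exactly the standard de la Vall\'ee Poussin argument underlying that citation (Perron's formula for $\psi(x)$, the zero-free region via $3+4\cos\varphi+\cos 2\varphi\ge 0$ with Borel--Carath\'eodory bounds, contour shift to $\sigma=1-c_0/\log T$, and the optimization $\log T\asymp\sqrt{\log x}$), so it is correct and takes essentially the same route. One cosmetic slip: since you shift only to $\sigma=1-c_0/\log T$, the point $s=0$ is never crossed, so your remark about a contribution from the factor $1/s$ at $s=0$ is spurious, though harmless.
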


Let $\Psi(x,y):=\#\{n\le x: P(n)\le y\}$, where $P(n)$ stands for the largest prime factor of $n\ge 2$, with $P(1)=1$. Moreover, let $\pi(x)$ stand for the number of prime numbers not exceeding $x$. The following estimate can be found in Granville \cite{kn:granville}.
\begin{prop} \label{prop:granville}
If $y=o(\log x)$ as $x\to \infty$, then
$$
\Psi(x,y) = \left( \frac{\log x}y \right)^{(1+o(1))\pi(y)}.
$$
\end{prop}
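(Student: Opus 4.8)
The plan is to recast $\Psi(x,y)$ as a count of lattice points and then to compare that count with the volume of a simplex. Write $p_1<p_2<\cdots<p_k$ for the primes up to $y$, where $k=\pi(y)$, and put $L=\log x$. An integer $n\le x$ is $y$-friable precisely when $n=\prod_{i=1}^k p_i^{a_i}$ with $a_i\ge0$, and then $n\le x$ is equivalent to $\sum_{i=1}^k a_i\log p_i\le L$. Hence
\[
\Psi(x,y)=N(L):=\#\Bigl\{(a_1,\dots,a_k)\in\Z_{\ge0}^k:\ \textstyle\sum_{i=1}^k a_i\log p_i\le L\Bigr\}.
\]

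First I would sandwich $N(L)$ between two volumes. For $T>0$, let $V(T)$ be the volume of the simplex $\{\xi\in\R_{\ge0}^k:\sum_i \xi_i\log p_i\le T\}$; the change of variables $u_i=\xi_i\log p_i$ gives $V(T)=T^k/(k!\prod_{i=1}^k\log p_i)$. Attaching to each admissible vector $(a_i)$ the unit cube $\prod_i[a_i,a_i+1)$, one checks that these cubes are pairwise disjoint, that their union contains the simplex at level $L$, and that it is contained in the simplex at level $L+\theta(y)$, where $\theta(y)=\sum_{p\le y}\log p$. Comparing volumes, this yields
\[
V(L)\ \le\ N(L)\ \le\ V\bigl(L+\theta(y)\bigr).
\]

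The heart of the matter is to evaluate $\log V(L)=k\log L-\log k!-\sum_{i=1}^k\log\log p_i$ to relative precision. I would use Stirling's formula (Lemma~\ref{lem:stirling}) in the form $\log k!=k\log k-k+O(\log k)$; the prime number theorem (Proposition~\ref{prop:pnt}, or Lemma~\ref{lem:rosser}) in the form $\pi(y)\sim y/\log y$, which gives $\log k=\log y-\log\log y+o(1)$; and partial summation together with $\pi(t)\ll t/\log t$ to obtain $\sum_{p\le y}\log\log p=k\log\log y+o(k)$. Substituting these, the two contributions of size $k\log\log y$ cancel and leave
\[
\log V(L)=k\log L-k\log y+O(k)=k\log(L/y)+O(k).
\]
Since $\theta(y)=O(y)=o(L)$, passing to the upper bound costs only $k\log(1+\theta(y)/L)=O(ky/L)=o(k)$, so $\log N(L)=k\log(L/y)+O(k)$ as well.

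It remains to exploit the hypothesis. As $y=o(\log x)$ we have $L/y\to\infty$, hence $\log(L/y)\to\infty$, so the error $O(k)$ is $o\bigl(k\log(L/y)\bigr)$ and
\[
\log\Psi(x,y)=(1+o(1))\,k\log(L/y)=(1+o(1))\,\pi(y)\log\frac{\log x}{y},
\]
which is exactly the asserted estimate. The step I expect to be the main obstacle is the evaluation of $\log V(L)$: one must verify that the $k\log\log y$ coming from $\log k!$ and the $k\log\log y$ coming from $\sum_{p\le y}\log\log p$ cancel to leave a remainder of size only $O(k)$, and then observe that the growth restriction $y=o(\log x)$ is precisely what makes this $O(k)$ negligible against the main term $k\log(L/y)$. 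Minor care is also needed for the few small primes with $\log\log p\le 0$, but these contribute only $O(\pi(\sqrt y)\log\log y)+O(1)=o(k)$ and do not affect the leading order.
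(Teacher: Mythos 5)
Your proof is correct, but there is nothing in the paper to measure it against: the paper gives no proof of Proposition~\ref{prop:granville} at all, quoting it from Granville's survey \cite{kn:granville}. What your argument really does is rederive the estimate from the lattice-point bound the paper states separately, for its lower-bound section, as Proposition~\ref{prop:ennola}: your cube-packing sandwich $V(L)\le N(L)\le V(L+\theta(y))$ is exactly Ennola's inequality (\ref{eq:ennola}) specialized to $a_i=\log p_i$ and $z=\log x$, since $\sum_{i\le k}a_i=\theta(y)$, so you could simply have cited it. From there your bookkeeping checks out: $\log k!=k\log k-k+O(\log k)$, $\log k=\log y-\log\log y+o(1)$, and $\sum_{p\le y}\log\log p=k\log\log y+o(k)$ by partial summation with $\pi(t)\ll t/\log t$, so the two terms of size $k\log\log y$ cancel and leave $\log V(L)=k\log(\log x/y)+O(k)$; and the hypothesis $y=o(\log x)$ is used in precisely the two places it must be, once to make $k\log\bigl(1+\theta(y)/\log x\bigr)=o(k)$ and once to make the $O(k)$ error negligible against $k\log(\log x/y)$, which tends to infinity relative to $k$. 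Two small points. First, your asymptotics ($\pi(y)\sim y/\log y$, etc.) presuppose $y\to\infty$; you should dispose of bounded $y$ separately, which is immediate from the same sandwich because then $k=O(1)$ and $\log V(L)=k\log L+O(1)=\bigl(1+o(1)\bigr)k\log(\log x/y)$. Second, since your error tolerance is only $O(k)$, the full prime number theorem is never needed: Chebyshev's bounds already give $\log k=\log y-\log\log y+O(1)$ and $\theta(y)\ll y$, which suffice throughout, so your proof is in fact more elementary than your own citations suggest. A pleasant by-product of your route is that it exhibits the paper's upper-bound ingredient (Granville's estimate) and its lower-bound ingredient (Ennola's inequality) as two faces of the same simplex-volume count.
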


The following is a 1969 result of  Ennola \cite{kn:ennola}, a proof of which is given in the book of Tenenbaum \cite{kn:ten}.
\begin{prop} \label{prop:ennola}
Let $a_1,a_2,\ldots$ be a sequence of positive real numbers and set
$$N_k(z):= \#\left\{ (\nu_1,\nu_2,\ldots,\nu_k)\in \Z^k : \nu_1\ge 0,\ldots,\nu_k\ge 0,\ \sum_{i=1}^k \nu_i a_i \le z  \right\}.$$
Then, for each positive integer $k$,
\begin{equation} \label{eq:ennola}
\frac{z^k}{k!} \prod_{i=1}^k \frac 1{a_i} < N_k(z) \le \frac{(z+\sum_{i=1}^k a_i  )^k}{k!} \prod_{i=1}^k \frac 1{a_i}.
\end{equation}
\end{prop}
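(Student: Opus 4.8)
The plan is to interpret $N_k(z)$ as a lattice-point count and compare it with the volume of the simplex
$$T := \left\{ x=(x_1,\ldots,x_k)\in\R^k : x_i\ge 0 \text{ for all } i,\ \sum_{i=1}^k a_i x_i \le z \right\}.$$
The change of variables $y_i = a_i x_i$ turns $T$ into the standard simplex $\{y_i\ge 0,\ \sum_i y_i\le z\}$ of volume $z^k/k!$, and introduces the Jacobian factor $\prod_i a_i^{-1}$; hence $\mathrm{Vol}(T) = \frac{z^k}{k!}\prod_{i=1}^k \frac 1{a_i}$, which is precisely the lower bound in (\ref{eq:ennola}). Replacing $z$ by $z+\sum_i a_i$ gives a larger simplex $T'$ of volume $\frac{(z+\sum_i a_i)^k}{k!}\prod_{i=1}^k \frac 1{a_i}$, which is the upper bound. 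So the whole task reduces to sandwiching $N_k(z)$ between $\mathrm{Vol}(T)$ and $\mathrm{Vol}(T')$.

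To carry this out I would attach to each admissible point $\nu=(\nu_1,\ldots,\nu_k)$ (that is, each lattice point counted by $N_k(z)$) the half-open unit cube $B_\nu := \prod_{i=1}^k [\nu_i,\nu_i+1)$. These cubes are pairwise disjoint and each has volume $1$, so if $U := \bigcup_{\nu} B_\nu$ is their union over all admissible $\nu$, then $\mathrm{Vol}(U)=N_k(z)$. The two bounds then follow from two set inclusions. For the upper bound I would verify $U\subseteq T'$: for $x\in B_\nu$ one has $x_i\ge\nu_i\ge 0$ and $x_i<\nu_i+1$, whence $\sum_i a_i x_i < \sum_i a_i\nu_i + \sum_i a_i \le z+\sum_i a_i$, giving $N_k(z)=\mathrm{Vol}(U)\le\mathrm{Vol}(T')$. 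For the lower bound I would verify $T\subseteq U$: given $x\in T$, set $\nu_i:=\lfloor x_i\rfloor$, so $\nu_i\ge 0$ and $\sum_i a_i\nu_i\le\sum_i a_i x_i\le z$; thus $\nu$ is admissible and $x\in B_\nu\subseteq U$, yielding $\mathrm{Vol}(T)\le N_k(z)$.

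The one genuinely delicate point — the step I expect to require the most care — is upgrading the lower inclusion to the \emph{strict} inequality $\frac{z^k}{k!}\prod_i\frac1{a_i} < N_k(z)$ asserted in (\ref{eq:ennola}). Since $T\subseteq U$, it suffices to exhibit a subset of $U\setminus T$ of positive measure. I would choose an admissible $\nu$ with $\nu_1$ as large as possible (the admissible set is finite and nonempty since each $a_i>0$ and $\nu=0$ is admissible); maximality forces $\nu+(1,0,\ldots,0)$ to be inadmissible, i.e. $\sum_i a_i\nu_i + a_1 > z$. A small full-dimensional corner of $B_\nu$ on which $x_1$ is within $\varepsilon$ of $\nu_1+1$ then satisfies $\sum_i a_i x_i > z$ for $\varepsilon$ small, so it lies in $U\setminus T$ and has positive volume. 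This gives $\mathrm{Vol}(U)>\mathrm{Vol}(T)$ and completes the argument.

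As an alternative route, the proposition also admits a proof by induction on $k$ via the recursion $N_k(z)=\sum_{\nu_k=0}^{\lfloor z/a_k\rfloor} N_{k-1}(z-\nu_k a_k)$ together with the base case $N_1(z)=\lfloor z/a_1\rfloor+1$, comparing the resulting sums with the corresponding integrals; but the geometric argument above is cleaner and makes the appearance of the two volumes transparent.
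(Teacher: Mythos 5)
Your argument is correct and complete, but a direct comparison with ``the paper's own proof'' is not possible here: the paper does not prove Proposition \ref{prop:ennola} at all, quoting it as a 1969 result of Ennola and deferring the proof to Tenenbaum's book. The cube-packing argument you give is in fact the standard proof of this estimate (essentially the one found in the cited literature): associate to each counted lattice point $\nu$ the half-open unit cube $\prod_i [\nu_i,\nu_i+1)$, so that $N_k(z)$ becomes the volume of the union $U$, and sandwich $U$ between the simplex $T$ (of volume $\frac{z^k}{k!}\prod_i a_i^{-1}$, via the diagonal change of variables $y_i=a_ix_i$) and the enlarged simplex $T'$ (with $z$ replaced by $z+\sum_i a_i$). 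Both inclusions are verified correctly, and you rightly identified the one delicate point, the \emph{strict} lower inequality: your device of taking an admissible $\nu$ with $\nu_1$ maximal, so that $\sum_i a_i\nu_i+a_1>z$, and extracting a corner slab $\{x\in B_\nu: x_1>\nu_1+1-\varepsilon\}$ of volume $\varepsilon$ lying outside $T$ (valid once $a_1\varepsilon<\sum_i a_i\nu_i+a_1-z$) is sound and settles strictness cleanly. The only remark worth adding is that the proposition implicitly assumes $z\ge 0$ — your appeal to the admissibility of $\nu=0$ uses this, and for $z<0$ and even $k$ the stated lower bound would actually fail — but this is a defect of the bare statement, not of your proof, and is harmless in the paper's application, where $z=\log x$ is large.
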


\section{The proof of the upper bound}

Observe that for every integer $n$ counted by ${\cal J}(x)$, each of its prime factors must be smaller than $\displaystyle{2\frac{\log x}{\log \log x}}$, provided $x$ is sufficiently large. We now define the four integers $r,r_1,r_2,r_3$ each depending on $x$ as follows.
\begin{eqnarray*}
r & = & \pi\left(2\frac{\log x}{\log \log x}\right), \mbox{ so that }\
r\le 3\frac{\log x}{(\log\log x)^2}, \\
r_1 & = & \pi\left(\frac{\sqrt{\log x}}{\log\log x}\right), \mbox{ which is asymptotic to } \frac{2\sqrt{\log x}}{(\log\log x)^2}\mbox{ as } x\to \infty, \\
r_2 & = & \pi\left(\sqrt{\log x}\right), \mbox{ which is asymptotic to } \frac{2\sqrt{\log x}}{\log\log x}\mbox{ as } x\to \infty, \\
r_3 & = & \pi\left(\sqrt{\log x}\log\log x\right), \mbox{ which is asymptotic to } 2\sqrt{\log x}\mbox{ as } x\to \infty.
\end{eqnarray*}
Let $m$ be a positive integer and $q_1,q_2$ be two prime numbers such that $q_1<q_2\le m$.  Assuming that $q_1^{\eta_1}\|m!$ and that $q_2^{\eta_2}\|m!$, it follows from Lemma \ref{lem:binomial-prime} that
$$
\frac{\eta_1}{\eta_2}\ge   \left\lfloor \frac{q_2}{q_1}\right\rfloor.
$$
Using these observations, we may write that $\mathcal{J}(x)\le \#\mathcal{A}(x)$, where
$$
\mathcal{A}(x): =\left\{a=(a_1,a_2,\ldots, a_r)\in\N^r:\sum_{j=1}^r a_j\log p_j\le \log x, \frac{a_i}{a_j}\ge \left\lfloor \frac{p_j}{p_i}\right\rfloor \right\}.
$$
In order to derive an upper bound for $\#\mathcal{A}(x)$, we introduce the four sets
\begin{eqnarray*}
\mathcal{B}_1(x) &:= &\{(b_1,\ldots,b_{r_1}):\exists a\in \mathcal{A}(x), a_1=b_1,\ldots, a_{r_1}=b_{r_1}\},\\
\mathcal{B}_2(x) & := & \{(b_1,\ldots, b_{r_2-r_1}):\exists a\in \mathcal{A}(x), a_{r_1+1}=b_1,\ldots, a_{r_2}=b_{r_2-r_1}\},\\
\mathcal{B}_3(x) & := & \{(b_1,\ldots, b_{r_3-r_2}):\exists a\in \mathcal{A}(x), a_{r_2+1}=b_1,\ldots, a_{r_3}=b_{r_3-r_2}\},\\
\mathcal{B}_4(x) & := & \{(b_1,\ldots, b_{r-r_3}):\exists a\in \mathcal{A}(x), a_{r_3+1}=b_1,\ldots, a_{r}=b_{r-r_3}\}.
\end{eqnarray*}
It is then clear that
$$\#\mathcal{A}(x)\le \#\mathcal{B}_1(x)\times \#\mathcal{B}_2(x) \times \#\mathcal{B}_3(x)\times \#\mathcal{B}_4(x).$$
We will now provide upper bounds for each of the quantities $\#\mathcal{B}_j(x)$ for $1\le j\le 4$.

Let $\varepsilon>0$ be an arbitrarily small number and let $x$ be a large number.

First observe that
$$
\#\mathcal{B}_1(x)\le \# \left\{(b_1,b_2,\ldots, b_{r_1}):\sum_{j=1}^{r_1} b_j\log p_j\le \log x \right\}.
$$
From this, it follows from Proposition \ref{prop:granville} that
$$
\#\mathcal{B}_1(x)\le \Psi\left(x,p_{r_1}\right) = \left(\frac{\log x}{p_{r_1}}\right)^{(1+o(1))r_1}\le (\sqrt{\log x}\log\log x)^{3\sqrt{\log x}/(\log\log x)^2},
$$
so that
\begin{equation}\label{b1}
\#\mathcal{B}_1(x)\le \exp\left(2\frac{\sqrt{\log x}}{\log\log x}\right).
\end{equation}

On the other hand, we have
\begin{eqnarray}\label{aux} \nonumber
\#\mathcal{B}_2(x) & \le & \#\left\{(b_1,\ldots,b_{r_2-r_1}):\left(\sum_{j=1}^{r_2-r_1}b_j\log p_{j+r_1}\right)+\left(b_1\sum_{j=1}^{r_1}\left\lfloor\frac{p_{r_1}}{p_j}\right\rfloor\log p_j\right)\le \log x\right\} \\
& \le & \#\left\{(b_1,\ldots,b_{r_2-r_1}):b_1\sum_{j=1}^{r_1}\left\lfloor\frac{p_{r_1}}{p_j}\right\rfloor\log p_j \le \log x\right\},
\end{eqnarray}
where we used the fact guaranteed by Lemma \ref{lem:binomial-prime}  that $\displaystyle{\frac{a_j}{b_1}\ge \left\lfloor\frac{p_{r_1}}{p_j}\right\rfloor}$ for $1\le j\le r_1$. We then perform a change of variable, namely the one given implicitly by
$$
b_k=\sum_{j=k}^{r_2-r_1}c_j,\quad 1\le k\le r_2-r_1.
$$
Given that the sequence $b_1,b_2,\ldots, b_{r_2-r_1}$ is non-increasing, we have $c_j\ge 0,\, 1\le j\le r_2-r_1$. From (\ref{aux}), it follows that
\begin{equation}\label{aux2}
\#\mathcal{B}_2(x)\le\#\left\{(c_1,\ldots,c_{r_2-r_1}):\left(\sum_{j=1}^{r_2-r_1}c_j\right)\left(\sum_{j=1}^{r_1}\left\lfloor\frac{p_{r_1}}{p_j}\right\rfloor\log p_j\right)\le \log x\right\}.
\end{equation}

Now,  it follows from  the prime number theorem that
\begin{equation} \label{eq:double}
\sum_{p\le y} \frac{\log p}p >(1-\varepsilon)\log y,
\end{equation}
provided  $y$ is sufficiently large.

Using inequality (\ref{eq:pk}) of Lemma \ref{lem:rosser}, as well as inequality (\ref{eq:double}) and Proposition \ref{prop:pnt}, we may write that
\begin{eqnarray} \label{eq:before} \nonumber
\sum_{j=1}^{r_1}\left\lfloor\frac{p_{r_1}}{p_j}\right\rfloor\log p_j & \ge &
p_{r_1} \sum_{j=1}^{r_1} \frac{\log p_j}{p_j} - \sum_{j=1}^{r_1}\log p_j \\ \nonumber
& \ge & (1-\varepsilon)
 \frac{\sqrt{\log x}}{\log \log x}\sum_{j=1}^{r_1}\frac{\log p_j}{p_j}-  \theta(p_{r_1}) \\ \nonumber
& \ge & (1-\varepsilon)
 \frac{\sqrt{\log x}}{\log \log x}(1-\varepsilon) \log p_{r_1} - (1+\varepsilon) p_{r_1} \\
& \ge &  \frac 13 \sqrt{\log x}.
\end{eqnarray}
Using this in (\ref{aux2}), we get
$$
\#\mathcal{B}_2(x)\le \#\left\{(c_1,\ldots, c_{r_2-r_1}):\sum_{j=1}^{r_2-r_1}c_j\le 3\sqrt{\log x}\right\},
$$
which, in light of Lemma \ref{lem:feller},  yields
\begin{eqnarray*}
\#\mathcal{B}_2(x) & \le & {r_2-r_1+\lceil3\sqrt{\log x}\rceil \choose \lceil 3\sqrt{\log x}\rceil}
\le \binom{\lceil\frac{2\sqrt{\log x}}{\log\log x}\rceil+\lceil 3\sqrt{\log x}\rceil}{\lceil 3\sqrt{\log x}\rceil}\\
& = & \binom{\lceil\frac{\sqrt{\log x}}{\log\log x}\rceil+\lceil 3\sqrt{\log x}\rceil}{\lceil\frac{\sqrt{\log x}}{\log\log x}\rceil}
\le \binom{2\lceil 3\sqrt{\log x}\rceil}{\lceil\frac{\sqrt{\log x}}{\log\log x}\rceil},
\end{eqnarray*}
where we used the fact that for any positive integers $a$ and $b$, we have $\displaystyle{ {a+b \choose b} = {a+b \choose a}}$.
Using Lemma \ref{lem:binomial}, it then follows that
\begin{equation}\label{b2}
\#\mathcal{B}_2(x)\le \exp\left\{\frac{\sqrt{\log x}\log\log\log x}{\log\log x}\right\}.
\end{equation}

An upper bound for $\#\mathcal{B}_3(x)$ is obtained using a similar technique. We have
\begin{equation}\label{aux3}
\#\mathcal{B}_3(x)\le\#\left\{(b_1,\ldots,b_{r_3-r_2}):\left(\sum_{j=1}^{r_3-r_2}b_j\log p_{j+r_2}\right)+\left(b_1\sum_{j=1}^{r_2}\left\lfloor\frac{p_{r_2}}{p_j}\right\rfloor\log p_j\right)\le \log x\right\}.
\end{equation}
Performing the change of variable
$$
b_k=\sum_{j=k}^{r_3-r_2}c_j,\quad 1\le k\le r_3-r_2,
$$
we obtain from (\ref{aux3}) that
\begin{equation}\label{aux4}
\#\mathcal{B}_3(x)\le\#\left\{(c_1,\ldots,c_{r_3-r_2}):\left(\sum_{j=1}^{r_3-r_2}c_j\right)\left(\sum_{j=1}^{r_2}\left\lfloor\frac{p_{r_2}}{p_j}\right\rfloor\log p_j\right)\le \log x\right\}.
\end{equation}

Again using (\ref{eq:pk}), (\ref{eq:double}) and  Proposition \ref{prop:pnt}, while proceeding as we did to obtain (\ref{eq:before}), we find that
$$
\sum_{j=1}^{r_2}\left\lfloor\frac{p_{r_2}}{p_j}\right\rfloor\log p_j
 \ge  p_{r_2} \sum_{j=1}^{r_2} \frac{\log p_j}{p_j} - \sum_{j=1}^{r_2} \log p_j
 \ge   \frac{1}{3}\sqrt{\log x}\log\log x.
$$
Using this in (\ref{aux4}), we obtain
$$
\#\mathcal{B}_3(x)\le \#\left\{(c_1,\ldots , c_{r_3-r_2}):\sum_{j=1}^{r_3-r_2}c_j\le 3\frac{\sqrt{\log x}}{\log\log x}\right\},
$$
from which we can deduce that
\begin{equation}\label{aux5}
\#\mathcal{B}_3(x)\le \binom{\lceil\frac{3\sqrt{\log x}}{\log\log x}\rceil+r_3}{\lceil 3\frac{\sqrt{\log x}}{\log\log x}\rceil}
\le  \binom{\lceil\frac{3\sqrt{\log x}}{\log\log x}\rceil+\lceil 2\sqrt{\log x}\rceil }{\lceil 3\frac{\sqrt{\log x}}{\log\log x}\rceil}.
\end{equation}
Again using Lemma \ref{lem:binomial}, we conclude from (\ref{aux5}) that
\begin{equation}\label{b3}
\#\mathcal{B}_3(x)\le\exp\left\{(3+\varepsilon)\frac{\sqrt{\log x}\log\log\log x}{\log\log x}\right\}.
\end{equation}

We finally provide an upper bound for $\#\mathcal{B}_4(x)$ again using the same approach.  We have
$$
\#\mathcal{B}_4(x)\le\#\left\{(b_1,\ldots,b_{r-r_3}):\left(\sum_{j=1}^{r-r_3}b_j\log p_{j+r_3}\right)+\left(b_1\sum_{j=1}^{r_3}\left\lfloor\frac{p_{r_3}}{p_j}\right\rfloor\log p_j\right)\le \log x\right\}.
$$
Proceeding as before, we get
$$
\sum_{j=1}^{r_3}\left\lfloor\frac{p_{r_3}}{p_j}\right\rfloor \log p_j\ge \frac{1}{3}\sqrt{\log x}\,(\log\log x)^2,
$$
which yields
$$
\#\mathcal{B}_4(x)\le \#\left\{(c_1,\ldots, c_{r}):\sum_{j=1}^{r}c_j\le 3\frac{\sqrt{\log x}}{(\log\log x)^2}\right\},
$$
from which we conclude
$$
\#\mathcal{B}_4(x)\le \binom{\lceil\frac{3\sqrt{\log x}}{(\log\log x)^2}\rceil+r}{\lceil 3\frac{\sqrt{\log x}}{(\log\log x)^2}\rceil}
\le
\binom{\lceil\frac{3\sqrt{\log x}}{(\log\log x)^2}\rceil+\lceil 3\frac{\log x}{(\log\log x)^2}\rceil}{\lceil 3\frac{\sqrt{\log x}}{(\log\log x)^2}\rceil},
$$
so that
\begin{equation}\label{b4}
\#\mathcal{B}_4\le \exp\left\{(3+\varepsilon)\frac{\sqrt{\log x}}{\log\log x}\right\}.
\end{equation}
Gathering estimates (\ref{b1}), (\ref{b2}), (\ref{b3}) and (\ref{b4}) completes the proof of the  upper bound (\ref{eq:1}).

\section{The proof of the lower bound}

Many
elements of ${\cal J}$ have two or more representations as a product of factorials. For instance, the number $24=4!=2!^2\cdot 3!$ has two, whereas $576=4!^2 =2!^4\cdot 3!^2 = 2!^2\cdot 3!\cdot 4!$ has three. In fact, one can easily show that given an arbitrary integer $k\ge 2$, there exists a Jordan-P\'olya number which has $k$ representations as the product of factorials. For instance, take the numbers $n_k:=2^{3k+3} 3^{k+1}$ ($k=1,2,\ldots$).   One can easily check that
\begin{eqnarray*}
n_k & = & 4! \cdot 3!^k \cdot 2!^{2k} \\
    & = & 4!^2 \cdot 3!^{k-1} \cdot 2!^{2(k-1)} \\
    &   & \qquad  \vdots \\
    & = & 4!^{k-1} \cdot 3!^2 \cdot 2!^4 \\
    & = & 4!^k \cdot 3! \cdot 2!^2,
\end{eqnarray*}
thereby revealing  $k$ distinct representations of $n_k$ as a product of factorials.

This phenomenon must be taken into account when establishing a lower bound for ${\cal J}(x)$. This is why we will consider a  subset of ${\cal J}$ whose elements have a unique representation as a product of ``prime factorials''. We choose ${\cal J}_\wp$ as the set of those elements $n\in {\cal J}$ which can be written as a product of prime factorials, that is, as $n=\prod_{i=1}^r p_i!^{\alpha_i}$ for some non negative integers $\alpha_i$'s,  where $p_1,p_2,\ldots$  stands for the sequence of primes. The interesting feature of this set is that one can easily show that each of its elements has a unique representation as a product of prime factorials. Observe that ${\cal J}\setminus {\cal J}_\wp\neq \emptyset$ since it contains the number $n=14!$ and in fact many more.
\vskip 5pt

We will establish a lower bound for ${\cal J}_\wp(x)$, which will ipso facto provide a lower bound for ${\cal J}(x)$.
Given a large number $x$, let $z=\log x$ and set $a_i=\log(p_i!)$ for $i=1,2,\ldots,k$. Then, applying the first inequality in relation (\ref{eq:ennola}) of Proposition \ref{prop:ennola}, we get that, for each positive integer $k$,
\begin{equation} \label{eq:p0}
{\cal J}_\wp(x) > \frac{\log^k x}{k! \cdot \prod_{i=1}^k \log(p_i!)}.
\end{equation}
Let $\varepsilon>0$ and let $k_0$ be a large integer.
Using Lemma \ref{lem:stirling},
we may write that for each large prime $p_i$, say with $i\ge k_0$,
$$
\log(p_i!) =  p_i\log p_i -p_i + O( \log p_i) =p_i\log p_i \left( 1 - \frac 1{\log p_i} +O \left( \frac 1p_i \right) \right),
$$
so that, for each $k>k_0$, we have
\begin{equation} \label{eq:p1}
\prod_{i=k_0}^k \log (p_i!) = \prod_{i=k_0}^k p_i \cdot \prod_{i=k_0}^k \log p_i \cdot \prod_{i=k_0}^k \left( 1 -\frac 1{\log p_i} +O\left( \frac 1{p_i}\right) \right).
\end{equation}
We will now overestimate each of the above three products.

Using Proposition \ref{prop:pnt}, we have
\begin{eqnarray} \label{eq:p2} \nonumber
\prod_{i=k_0}^k p_i & < & \exp\left\{ \sum_{p\le p_k} \log p  \right\} =\exp\left\{ \theta(p_k)  \right\} = \exp\left\{ p_k \left( 1 + O \left( \frac 1{\log^2 k} \right) \right) \right\} \\
& < & \exp\left\{ ( k\log k + k\log \log k)  \left( 1 + O \left( \frac 1{\log^2 k} \right) \right)   \right\},
\end{eqnarray}
where we used inequality (\ref{eq:pk-6}) of Lemma \ref{lem:rosser}.

On the other hand, using once more the first inequality in Lemma \ref{lem:rosser}, we easily observe that
$\log \log p_i < (1+\varepsilon) \log \log i$ provided $i$ is sufficiently large.
It follows that
\begin{equation} \label{eq:p3}
\prod_{i=k_0}^k \log p_i = \exp\left\{ \sum_{i=k_0}^k \log \log p_i \right\} < \exp\left\{ \sum_{i=k_0}^k (1+\varepsilon) \log \log k  \right\}
< \exp\{ (1+\varepsilon) k \log \log k \}.
\end{equation}

Finally,
\begin{eqnarray} \label{eq:p4} \nonumber
\prod_{i=k_0}^k \left( 1 - \frac 1{\log p_i} +O\left( \frac 1{p_i} \right) \right)
& = & \exp\left\{ \sum_{i=k_0}^k \log \left( 1 - \frac 1{\log p_i} +O\left( \frac 1{p_i} \right) \right) \right\} \\
& = &  \exp\left\{ - \sum_{i=k_0}^k \frac 1{\log p_i} + O\left( \sum_{i=k_0}^k \frac 1{\log^2 p_i} \right) \right\} .
\end{eqnarray}
Since
\begin{eqnarray} \label{eq:p5} \nonumber
\sum_{i=k_0}^k \frac 1{\log p_i} & = & \int_{p_{k_0}}^{p_k} \frac 1{\log t} d\,\pi(t) =\left. \frac{\pi(t)}{\log t} \right|_{p_{k_0}}^{p_k}
+ \int_{p_{k_0}}^{p_k} \frac{\pi(t)}{t\log^2 t}\,dt \\
& = & \frac k{\log p_k} + O \left( \frac k{\log^2 p_k} \right) > \frac k{\log k} + O\left( \frac {k\log \log k}{\log^ 2 k} \right),
\end{eqnarray}
using estimate (\ref{eq:p5}) in (\ref{eq:p4}), we find that
\begin{equation} \label{eq:p6a}
\prod_{i=k_0}^k \left( 1 - \frac 1{\log p_i} +O\left( \frac 1{p_i} \right) \right) < \exp\left\{ - \frac k{\log k} + O\left( \frac {k\log \log k}{\log^ 2 k} \right) \right\}.
\end{equation}

Setting $\displaystyle{C_0=\prod_{i=1}^{k_0-1} \log(p_i!)}$ and gathering inequalities (\ref{eq:p2}), (\ref{eq:p3}) and (\ref{eq:p6a}) in (\ref{eq:p1}), we find that
\begin{equation} \label{eq:p7}
\prod_{i=1}^k \log(p_i!) = \prod_{i=1}^{k_0-1} \log(p_i!) \cdot \prod_{i=k_0}^k \log(p_i!) < C_0 (k \log ^2 k)^k \ e^{-k/\log k}.
\end{equation}
Finally, using Lemma \ref{lem:stirling}, we have that, provided $k_0$ is large enough,
\begin{equation} \label{eq:p8}
k! < (1+\varepsilon) k^k\, e^{-k} \, \sqrt{2\pi k} \qquad (k\ge k_0).
\end{equation}
Combining (\ref{eq:p7}) and (\ref{eq:p8}) in (\ref{eq:p0}), we obtain that
\begin{equation} \label{eq:p9}
{\cal J}_\wp(x) > \frac 1{C_0} \left( \frac{e^{1+1/\log k}\log x}{(1+\varepsilon) k^2 \log^2 k } \right)^k \qquad (k\ge k_0).
\end{equation}
Our goal will be to search for an integer $k=k(x)$ for which the function
$$ f(k):= \left( \frac{e^{1+1/\log k}\log x}{(1+\varepsilon) k^2 \log^2 k } \right)^k $$
reaches its maximum value, or equivalently for which real number $s$ the function $g(s):=\log f(s)$ reaches its maximum value.
Since
$$g(s)=s\left( 1+ \frac 1{\log s} +\log \log x -2\log s - 2\log \log s \right),$$
we have
\begin{eqnarray*}
g'(s) & = & 1+ \frac 1{\log s} +\log \log x -2\log s - 2\log \log s + s \left(- \frac 1{s\log ^2 s}  - \frac 2s - \frac 2{s\log s} \right) \\
& = &-1 + \log \log x -2\log s - 2\log \log s  - \frac 2{\log s}.
\end{eqnarray*}
For large $x$ and large $s$, the right hand side of the above expression will be near 0 when
$$\log \log x -2\log s - 2\log \log s =0,$$
or similarly,
$\log(s^2\log ^2s) = \log \log x$ and therefore  $s\log s = \sqrt{\log x}$, from which we find that
$$ s = \frac{\sqrt{\log x}}{\log s } = (1+o(1)) \frac{2\sqrt{\log x}}{\log \log x} \qquad (x\to \infty).$$
Substituting this value of $s$ in (\ref{eq:p9}), we find that
$$
{\cal J}_\wp(x) >  \exp\left\{ (1+o(1)) \frac{2\sqrt{\log x}}{\log \log x} \right\} \qquad (x\to \infty),
$$
thus establishing the required lower bound (\ref{eq:1a}).
\vskip 20pt

\noindent
{\bf Acknowledgements} The work of the first and second authors was supported by individual discovery grants from NSERC of Canada.

\vskip 20pt

\vfill
\noindent

\end{document}